\newtheorem{theorem}{Theorem}[section]
\newtheorem{lemma}[theorem]{Lemma}
\newtheorem{proposition}[theorem]{Proposition}
\newtheorem{corollary}[theorem]{Corollary}
\newtheorem{claim}[theorem]{Claim}
\newtheorem{fact}[theorem]{Fact}
\newtheorem{problem}[theorem]{Problem}
\title{Synchronizing random automata through repeated 'a' inputs}
\author{Anders Martinsson\thanks{Department of Computer Science, ETH Z\"urich, Switzerland\newline anders.martinsson@inf.ethz.ch}}
\date{\today}
\begin{document}

\maketitle

\begin{abstract}
In a recent article by Chapuy and Perarnau, it was shown that a uniformly chosen automaton on $n$ states with a $2$-letter alphabet has a synchronizing word of length $O(\sqrt{n}\log n)$ with high probability. In this note, we improve this result by showing that, for any $\varepsilon>0$, there exists a synchronizing word of length $O(\varepsilon^{-1}\sqrt{n \log n})$ with probability $1-\varepsilon$. Our proof is based on two properties of random automata. First, there are words $\omega$ of length $O(\sqrt{n \log n})$ such that the expected number of possible states for the automaton, after inputting $\omega$, is $O(\sqrt{n/\log n})$. Second, with high probability, each pair of states can be synchronized by a word of length $O(\log n)$.
\end{abstract}

\section{Introduction}
A deterministic finite automaton (from here on automaton) $\mathcal{A}=(Q, \Sigma, \delta)$ is a finite set $Q$ of states, a finite alphabet $\Sigma$, and a transition function $\delta:Q\times \Sigma \rightarrow Q$ that assigns a function $Q\rightarrow Q$ to each character in $\Sigma$. It is natural to extend the transition function to assign, for any word $\omega=\omega_1\omega_2\dots \omega_k\in \Sigma^*$, the transition function $\delta_\omega := \delta_{\omega_k}\circ \cdots\circ \delta_{\omega_1}$. Given a set $A\subseteq Q$ of states we further write $\delta_\omega(A)$ to denote $\{\delta_\omega(x) : x\in A\}$. A synchronizing word, also known as a reset sequence, is a word $\omega\in\Sigma^*$ such that $|\delta_\omega(Q)|=1$. An automaton is called synchronizable if it admits to such a word. 

A famous open problem about automata is to determine the smallest $C(n)$ such that any synchronizable automaton on $n$ states has a synchronizing word of length at most $C(n)$. In particular, the \v{C}ern\'y Conjecture proposes that $C(n)=(n-1)^2$. This question was first asked in an article by \v{C}ern\'y from 1964 \cite{Cerny}, in which it was shown that $(n-1)^2 \leq C(n) \leq 2^n-n-1$. Two years later, Starke \cite{Starke} improved the upper bound to $C(n)\leq O(n^3)$. In fact, the idea to prove $C(n)=O(n^3)$ is quite simple. By the pigeon hole principle, the shortest word that synchronizes any pair of states has length at most $O(n^2)$. Thus, repeating this $n-1$ times for suitably chosen pairs of states yield a synchronizing word of length $O(n^3)$. Over the last almost 60 years, constant factor improvements on this bound have been shown in \cite{Pin83, Fra82, Szy18} until the currently best known bound of $C(n)\leq 0.1654n^3+o(n^3)$ as given by Shitov \cite{Shi19} in 2019. 

As resolving the conjecture for general automata has shown to be a very challenging task, it is natural to consider the conjecture on special cases. One such case is to consider the behavior of a random automaton. We will here take $Q=[n]$, $\Sigma=\{a, b\}$, and let $\delta$ be chosen uniformly at random among all $n^{2n}$ possible transition functions. In other words, $
\delta$ independently maps each pair in $[n]\times \{a, b\}$ to a uniformly chosen state in $[n]$. In 2013, Cameron \cite{cam13} conjectured that a automaton obtained in this manner is synchronizable with high probability\footnote{We say that an event occurs with high probability if it occurs with probability $1-o(1)$ as $n\rightarrow\infty$.}. Indeed, this was proven affirmatively by Berlinkov \cite{Ber16}. Nicaud \cite{Nic19} refined this by showing that the automaton, with high probability, has a synchronizing word of length $O(n\log^3 n)$, thus in particular showing that the \v{C}ern\'y Conjecture holds for almost all automata. Very recently, this bound was further improved by Chapuy and Perarnau \cite{CP23} to $O(\sqrt{n}\log n)$ using an elaborate second moment argument.

Concerning lower bounds for the shortest synchronizing word in a random automata, it appears not much is rigorously known. It appears to be folklore in the area that, with high probability, the shortest synchronizing word has length at least $C n^{1/3}$, but to the authors knowledge this has never been published. Simulation studies \cite{KKS13, ST11, SZ22} indicate that the length increases as $\Theta(n^\alpha)$ for either $\alpha=\frac12$ or $\alpha=\frac12 + \varepsilon$ for some small $\varepsilon>0$, which seems to indicate that the bound by Chapuy and Perarnau is best possible, at least up to log factors. 

In this note, we will improve the result by Chapuy and Perarnau by showing that a random automaton typically has a synchronizing word of length $O(\varepsilon^{-1} \sqrt{n\log n})$ with probability $1-\varepsilon$. Alternatively, a random automaton has a synchronizing word of length $c_n \sqrt{n\log n}$ with high probability for any function $c_n$ that tends to infinity arbitrarily slowly in $n$. Compared to the result by Chapuy and Perarnau, this result has the benefit of having a considerably shorter proof.

We will build a synchronizing word in two phases. In the first phase, we aim to find a word $\omega$ such that $|\delta_\omega([n])|$ is not necessarily $1$, but at least much smaller than $n$. As a warm up example for the first phase, we consider the case where $\omega$ is a sequence of $\Theta(\sqrt{n\log n})$ 'a' transitions in a row.

\begin{theorem}\label{thm:aaa} Let $A:=\delta_{a^k}([n])$ denote the set of states that can be reached by performing $k:=\lceil 2 \sqrt{n \ln n}\rceil$ 'a'-transitions in a row. The expected size of $A$ is at most $(1+o(1))\sqrt{2\pi n}.$
\end{theorem}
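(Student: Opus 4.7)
By the symmetry of the distribution of $\delta_a$ under relabelings of $[n]$, every state $y_0 \in [n]$ has the same probability of belonging to $A$, so $\mathbb{E}[|A|] = n \cdot \Pr[y_0 \in A]$ for any fixed $y_0$, and it suffices to prove $\Pr[y_0 \in A] \leq (1+o(1))\sqrt{2\pi/n}$.

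My plan is to split the event $\{y_0 \in A\}$ based on the local structure of the functional graph of $\delta_a$. Any witness $x$ with $\delta_{a^k}(x) = y_0$ yields a trajectory $x = x_0, x_1, \ldots, x_k = y_0$ with $x_{i+1} = \delta_a(x_i)$. Either all $x_i$ are distinct, giving a simple preimage chain of length $k$ ending at $y_0$, or some $x_i = x_j$ with $i<j$, in which case $x_{i+t} = x_{j+t}$ for all $t$ and $y_0 = x_k$ lies on the cycle $\{x_i, x_{i+1}, \ldots, x_{j-1}\}$. Thus
\begin{equation*}
\Pr[y_0 \in A] \;\leq\; \Pr[y_0 \text{ lies on a cycle of } \delta_a] \;+\; \Pr[\exists \text{ simple chain of length } k \text{ ending at } y_0].
\end{equation*}

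I would bound the cyclic term by enumerating cycle lengths: the probability that $y_0$ lies on a cycle of length exactly $\ell$ equals $\frac{1}{n}\prod_{i=1}^{\ell-1}(1-i/n)$, so summing gives $\Pr[y_0 \text{ cyclic}] = \frac{1}{n}\sum_{m \geq 0}\prod_{i=1}^{m}(1-i/n)$. Approximating $\prod_{i=1}^{m}(1-i/n) \approx e^{-m^2/(2n)}$ and treating the sum as a Riemann approximation to $\int_0^\infty e^{-m^2/(2n)}\,dm = \sqrt{\pi n/2}$ yields $\Pr[y_0 \text{ cyclic}] = (1+o(1))\sqrt{\pi/(2n)}$. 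For the chain term, Markov's inequality applied to the number of distinct-vertex chains $(y_0, y_1, \ldots, y_k)$ with $\delta_a(y_{i+1}) = y_i$ gives probability at most $\prod_{i=1}^{k}(1-i/n) \leq e^{-k(k+1)/(2n)} \leq n^{-2}$ for $k = \lceil 2\sqrt{n \ln n}\rceil$.

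Combining, $\mathbb{E}[|A|] \leq n \cdot (1+o(1))\sqrt{\pi/(2n)} + n \cdot n^{-2} = (1+o(1))\sqrt{\pi n/2} + O(n^{-1})$, and since $\sqrt{\pi n/2} < \sqrt{2\pi n}$ the theorem follows. (The union bound is in fact a factor of $2$ stronger than the claim, reflecting the fact that for $k$ much larger than $\sqrt{n}$ essentially only cyclic vertices survive.) The main technical point is the Gaussian asymptotic for the cyclic-vertex sum, which I would handle by splitting at $m \approx \sqrt{n \ln n}$: on the head, the Taylor expansion $\log(1 - i/n) = -i/n + O(i^2/n^2)$ justifies the Gaussian approximation up to $o(1)$ relative error, and on the tail $\prod_{i=1}^{m}(1-i/n) \leq e^{-m^2/(2n)}$ decays fast enough to be negligible. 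I do not foresee a serious obstacle.
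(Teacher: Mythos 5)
Your proof is correct and follows essentially the same route as the paper: decompose the reachable states into cyclic vertices plus endpoints of length-$k$ simple chains, evaluate the cyclic probability via the birthday-type product and a Gaussian integral, and kill the chain term by a first-moment bound. Your constant $\sqrt{\pi n/2}$ is in fact the correct asymptotic for the expected number of cyclic vertices (the paper's displayed evaluation of $\sqrt{2n}\int_0^\infty e^{-t^2}\,dt$ drops the factor $\tfrac12$ from $\int_0^\infty e^{-t^2}\,dt=\sqrt{\pi}/2$), so your bound is a factor of $2$ sharper, though both suffice for the stated upper bound.
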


Using the above result in phase one would allow us to match the bound by Chapuy and Perarnau. In order to make the additional factor $\sqrt{\log n}$ improvement, we modify $\omega$ by periodically inputting 'b':s every $\sqrt{n}$ positions.

\begin{theorem}\label{thm:aba} Let $A:=\delta_{\omega}([n])$ denote the set of states that can be reached by inputting the sequence $\omega:=a^{\lceil \sqrt{n}\rceil} (b(a^{\lceil \sqrt{n}\rceil}))^{\lceil \sqrt{\log_2 n}\rceil}$. The expected size of $A$ is $O(\sqrt{n/\log n}).$
\end{theorem}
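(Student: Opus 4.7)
Set $k := \lceil\sqrt{n}\rceil$ and $\ell := \lceil\sqrt{\log_2 n}\rceil$, and for $0 \le i \le \ell$ define $T_i := \delta_{a^k(ba^k)^i}([n])$, so that $T_0 = \delta_{a^k}([n])$, $T_{i+1} = \delta_{a^k}(\delta_b(T_i))$, and $A = T_\ell$. The plan is to prove by induction that $\mathbb{E}[|T_i|] \le C\sqrt n /(i+1)$ for a suitable absolute constant $C$, which yields the desired $O(\sqrt{n/\log n})$ bound at $i = \ell$.

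The base case $\mathbb{E}[|T_0|] = O(\sqrt n)$ is an adaptation of the proof of Theorem~\ref{thm:aaa} with $k = \sqrt n$ in place of $\sqrt{n\log n}$: by linearity of expectation, it suffices to show that $\Pr[y \in \delta_{a^k}([n])] = O(1/\sqrt n)$ for a fixed $y$, which follows from standard random-mapping estimates on the depth of the preimage tree. For the inductive step, the heart of the argument is the recursion
\[\mathbb{E}\bigl[|T_{i+1}|\bigm|T_i\bigr]\le \frac{C\sqrt n\cdot |T_i|}{C\sqrt n+|T_i|}.\]
The intuition is that, by symmetry of $b$, the set $R := \delta_b(T_i)$ looks like an (almost) uniformly random subset of $[n]$ of expected size $\approx |T_i|$, and then $T_{i+1}=\delta_{a^k}(R)$ consists of the endpoints of $|R|$ length-$k$ walks in the functional graph of $a$. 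The expected number of distinct endpoints of $m$ such walks behaves like $m\cdot C\sqrt n/(m+C\sqrt n)$: walks coalesce pairwise at rate $\sim 1/n$ per step and each ``settles into the core'' within $O(\sqrt n)$ steps. Applying Jensen's inequality to the concave map $x\mapsto C\sqrt n\cdot x/(C\sqrt n+x)$ then gives $1/\mathbb{E}[|T_{i+1}|]\ge 1/\mathbb{E}[|T_i|] + 1/(C\sqrt n)$, whence $\mathbb{E}[|T_i|]\le C\sqrt n/i$ for $i\ge 1$.

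The main technical obstacle is that the same pair $(a,b)$ is used across all blocks, so one cannot directly invoke random-mapping estimates as if $R$ were independent of $a$. I would handle this by observing that $T_j\subseteq T_0$ for all $j\ge 1$ and $|T_0|=O(\sqrt n)$, so the only ``live'' randomness in $b$ ever queried is the vector $b|_{T_0}\in [n]^{T_0}$. Conditional on $a$, the composed map $g := \delta_{a^k}\circ \delta_b\colon T_0 \to T_0$ then has independent coordinates distributed according to the pushforward of the uniform law on $[n]$ by $a^k$, which is very close to uniform on $T_0$ for typical $a$. The process $(T_i)$ is exactly the orbit of $T_0$ under iteration of $g$, and the required shrinkage bound becomes a random-mapping estimate for $g$; controlling the rare ``unbalanced'' events in $a$ and making this reduction rigorous is where the main technical work lies.
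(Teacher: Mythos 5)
Your reduction is the right one and matches the paper's: since $T_j\subseteq T_0=A'$ for all $j$, everything reduces to iterating the single map $g=\delta_{a^{k}}\circ\delta_b$ on $A'$, and conditional on $\delta_a$ the coordinates of $g$ on $A'$ are i.i.d.\ with law equal to the pushforward of the uniform measure on $[n]$ under $\delta_{a^{k}}$. But there are two genuine gaps in how you propose to finish. First, that pushforward law is \emph{not} ``very close to uniform on $T_0$ for typical $a$'', and this is not a rare unbalanced event to be controlled: the mass of $y\in A'$ is $|\delta_{a^k}^{-1}(y)|/n$, and in a random functional graph these preimage counts fluctuate wildly (states sitting under large in-trees carry a constant fraction of the mass). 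So you cannot reduce to the uniform random-mapping estimates; you need the shrinkage bound for a $1$-out-regular digraph with an \emph{arbitrary} probability vector. Supplying exactly that is the main technical content of the paper (its Theorem~\ref{thm:inhom}): the cyclic-vertex count is handled by showing the uniform vector is the extremal case, and the count of endpoints of length-$k$ paths is handled by a Galton--Watson extinction-probability induction (Proposition~\ref{prop:ind}) that is valid for every probability vector via a convexity/Jensen step over a random target set.

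Second, the ``heart of the argument'' as you state it --- the one-step recursion $\mathbb{E}[|T_{i+1}|\mid T_i]\le C\sqrt n\,|T_i|/(C\sqrt n+|T_i|)$ --- is not available once you pass to the $g$-iteration viewpoint, and the coalescence heuristic behind it presumes fresh randomness at every block. After the first application of $g$ to $T_0$, the values of $g$ on $T_1\subseteq T_0$ (indeed on all of $T_0$) are already exposed, so conditional on the history $T_{i+1}$ is deterministic and no per-step random-mapping estimate applies. What is actually needed, and what the paper proves, is the one-shot statement that for a random map with i.i.d.\ (possibly non-uniform) coordinates on a set of size $m$, $\mathbb{E}\,|g^{(\ell)}(V)|=O(\sqrt m+m/\ell)$, obtained by bounding separately the cyclic vertices and the endpoints of directed paths of length $\ell$; with $m=|A'|=O(\sqrt n)$ and $\ell=\lceil\sqrt{\log n}\rceil$ this gives the theorem after averaging over $\delta_a$. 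Your final numerology ($\mathbb{E}|T_\ell|\lesssim\sqrt n/\ell$) is correct, but the inductive mechanism you propose to reach it does not survive the dependence you yourself identify, and the non-uniformity issue is not addressed.
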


Having reduced the number of possible states, in the second phase we proceed as in the proof by Starke by iteratively synchronizing pairs of states until only one possibility remains.

Determining the shortest word that synchronizes two states is morally similar to determining the shortest distance between two vertices in a random graph. Indeed, as one would expect from that paradigm, the shortest word is typically of order $\log n$.

\begin{theorem}\label{thm:pair} With high probability, for all pairs of states $x, y$ there exists a word $\omega=\omega^{xy}$ of length at most $3 \log_2 n$ such that $\delta_\omega(x)=\delta_\omega(y)$.
\end{theorem}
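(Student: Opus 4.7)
Fix an ordered pair $(x, y)$ with $x \neq y$ and set $\ell := \lceil 3 \log_2 n \rceil$. My plan is to prove the theorem directly (without a union bound over pairs) by bounding the expected number of ``bad'' pairs. Let $R_t \subseteq [n]^2$ denote the set of pairs at forward distance at most $t$ from the diagonal $D := \{(z, z) : z \in [n]\}$ in the pair graph on $[n]^2$ (whose edge from $(u, v)$ under letter $c$ goes to $(\delta_c(u), \delta_c(v))$). The theorem is equivalent to $R_\ell = [n]^2$ with high probability; by Markov's inequality, it suffices to show $E\bigl[\,|[n]^2 \setminus R_\ell|\,\bigr] = o(1)$.

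The key recurrence is $R_{t+1}^c = \{p \in D^c : \delta_a(p), \delta_b(p) \in R_t^c\}$. For a fixed pair $p = (u, v) \in D^c$, the random pair $\delta_a(p) = (\delta_a(u), \delta_a(v))$ is uniform in $[n]^2$ and is independent of $\delta_b(p)$ (different letters). Treating these events as also approximately independent of the set $R_t^c$ itself leads to the recurrence $\mu_{t+1} \leq (1 - 1/n)\mu_t^2 \leq \mu_t^2$ for $\mu_t := E[|R_t^c|]/n^2$. Iterating from $\mu_0 = 1 - 1/n$ gives $\mu_t \leq (1 - 1/n)^{2^t} \leq e^{-2^t/n}$; for $t_* := \lceil \log_2 n + 2\log_2\log_2 n\rceil$, which is well below $\ell$, one obtains $\mu_{t_*} \leq e^{-(\log_2 n)^2}$, so $E[|R_{t_*}^c|] \leq n^2 e^{-(\log_2 n)^2} = o(1)$, and $R_{t_*} = [n]^2$ whp completes the argument since $R_{t_*} \subseteq R_\ell$.

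The main technical obstacle is making the ``approximate independence'' above rigorous: the events $\delta_a(p) \in R_t^c$ and $\delta_b(p) \in R_t^c$ are not independent of $R_t^c$, since all three depend on the same $\delta$. A clean way to handle this is a two-round exposure argument: first condition on the four values $\delta_a(u), \delta_a(v), \delta_b(u), \delta_b(v)$; the remaining entries of $\delta$ are then independent uniform, and $R_t^c$ depends only weakly on the conditioned values for ``generic'' pairs $p$, which leads to the claimed recurrence with a $1 + o(1)$ correction. An alternative is a direct BFS analysis in the reverse pair graph using lazy exposure, comparing the growth of $|R_t|$ to a Galton--Watson branching process in which offspring under each letter are i.i.d.\ uniform pairs, and controlling the deviations via Azuma's inequality on the corresponding Doob martingale.
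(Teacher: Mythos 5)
There is a genuine gap here: the recurrence $\mu_{t+1}\le(1-1/n)\mu_t^2$ is not merely hard to make rigorous --- it is provably false in the regime where you need it, and no $(1+o(1))$-per-step correction can repair it. Concretely, fix $u\neq v$ and consider the event that $\delta_a(u)=\delta_b(u)=v$ and $\delta_a(v)=\delta_b(v)=u$; this has probability $n^{-4}$, and on it the pair $(u,v)$ alternates between $(u,v)$ and $(v,u)$ under every word, so $(u,v)\in R_t^c$ for all $t$. Hence $\mathbb{E}|R_t^c|\ge n(n-1)n^{-4}\sim n^{-2}$ and $\mu_t\ge(1-o(1))n^{-4}$ for \emph{every} $t$, whereas iterating your recurrence to $t_*$ yields $\mu_{t_*}\le e^{-(\log_2 n)^2}=n^{-\Theta(\log n)}$; the discrepancy is a factor $n^{\Theta(\log n)}$, far beyond any $(1+o(1))^{O(\log n)}$ correction. (The same obstruction in the limit: your recurrence forces $\mu_\infty=0$, i.e.\ that every automaton is synchronizing, but non-synchronizing automata --- e.g.\ those in which both letters act as permutations --- occur with positive probability.) A second, independent problem is the direction of Jensen: even granting full independence of $(\delta_a(p),\delta_b(p))$ from $R_t^c$, one gets $\Pr(\delta_a(p)\in R_t^c,\ \delta_b(p)\in R_t^c)=\mathbb{E}\bigl[(|R_t^c|/n^2)^2\bigr]\ge\mu_t^2$, so the recurrence additionally requires second-moment concentration of $|R_t^c|$ --- which fails exactly when $\mathbb{E}|R_t^c|$ is small, since then $|R_t^c|$ is typically $0$ but large on rare events.

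The underlying issue is that a pure first-moment bound $\mathbb{E}|R_\ell^c|=o(1)$ needs $\Pr(p\in R_\ell^c)=o(n^{-2})$ for typical pairs, and the obstacle to this is precisely the rare local structures above (small sets closed under both letters, non-expanding forward neighborhoods). The paper's proof is organized around this difficulty: a second-moment argument over words gives failure probability only $O(\log^4 n/n)$ per pair (Proposition~\ref{prop:weakpair}); a two-round exposure with a red/blue sprinkling boosts this to $O(1/n^2)$ per pair, but only up to the escape clause $|L_b|\ge n^2/2$ (Proposition~\ref{prop:explore}); and the handful of surviving bad pairs are then dispatched by a separate count of small ``terminal'' vertex sets --- exactly the structures that break your recurrence. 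Your second suggested fix (backward BFS from the diagonal with a branching-process comparison) points in a workable direction, but it would have to incorporate an analogue of that terminal-set argument together with a genuine concentration statement; as written, neither sketch closes the gap.
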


Putting all of this together, we obtain the following result.

\begin{corollary}\label{cor:synch} Let $X$ denote the length of the shortest synchronizing word. For any $0<\varepsilon\leq 1$ and any $n\geq n_0(\varepsilon)$ it holds that
$$ \Pr(X\leq C\varepsilon^{-1} \sqrt{ n\log n}) \geq 1-\varepsilon, $$
where $C>0$ is a universal constant independent of $\varepsilon$.
\end{corollary}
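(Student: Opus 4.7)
The plan is to combine the two phases described in the introduction: use Theorem~\ref{thm:aba} to shrink the reachable set to $O(\varepsilon^{-1}\sqrt{n/\log n})$ states, then iteratively merge pairs using Theorem~\ref{thm:pair}, each merge costing $O(\log n)$ letters.

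First, I let $\omega_1$ be the (deterministic) word from Theorem~\ref{thm:aba}, which has length $O(\sqrt{n\log n})$, and set $A := \delta_{\omega_1}([n])$. Since $E[|A|] \leq C_0 \sqrt{n/\log n}$ for a universal constant $C_0$, Markov's inequality applied to the nonnegative random variable $|A|$ yields
\[
\Pr\!\left(|A| > \frac{2 C_0}{\varepsilon}\sqrt{n/\log n}\right) \;\leq\; \frac{\varepsilon}{2}.
\]
Independently, Theorem~\ref{thm:pair} provides words $\omega^{xy}$ of length at most $3\log_2 n$ synchronizing every pair $x,y$ with probability $1-o(1)$, so for $n\geq n_0(\varepsilon)$ this probability is at least $1-\varepsilon/2$. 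A union bound shows that both events hold simultaneously with probability at least $1-\varepsilon$.

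Conditional on these two events, I build the synchronizing word in the obvious greedy way. Start with $A_0 := A$ and, as long as $|A_i|\geq 2$, pick any two distinct states $x,y\in A_i$ and set $A_{i+1} := \delta_{\omega^{xy}}(A_i)$. Because $\delta_{\omega^{xy}}(x)=\delta_{\omega^{xy}}(y)$, we have $|A_{i+1}|\leq |A_i|-1$, so after at most $|A|-1$ iterations the set has collapsed to a single state. Concatenating $\omega_1$ with the words $\omega^{xy}$ used along the way produces a synchronizing word of length at most
\[
|\omega_1| + (|A|-1)\cdot 3\log_2 n \;=\; O(\sqrt{n\log n}) + O\!\left(\varepsilon^{-1}\sqrt{n/\log n}\right)\cdot 3\log_2 n \;=\; O(\varepsilon^{-1}\sqrt{n\log n}),
\]
as required, for an absolute constant $C$.

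There is no serious obstacle beyond chasing constants: the one place to be careful is that Theorem~\ref{thm:aba} bounds only the \emph{expected} size of $A$, so Markov's inequality costs us the factor $\varepsilon^{-1}$ in the final bound (this is precisely where the dependence on $\varepsilon$ enters). The fact that applying $\omega^{xy}$ to the whole set $A_i$, rather than just to $\{x,y\}$, is harmless since merging two states can only decrease the image size; and the fact that the words $\omega^{xy}$ are guaranteed to exist \emph{for every} pair simultaneously (rather than depending on which intermediate set $A_i$ arises) is exactly what the ``for all pairs'' quantifier in Theorem~\ref{thm:pair} buys us.
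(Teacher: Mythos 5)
Your proposal is correct and follows essentially the same route as the paper: apply Markov's inequality to $|A|$ from Theorem~\ref{thm:aba} (which is exactly where the $\varepsilon^{-1}$ enters), combine with the with-high-probability pairwise synchronization of Theorem~\ref{thm:pair} via a union bound, and greedily merge pairs at cost $3\log_2 n$ each. The only cosmetic difference is that the paper folds the greedy-merging bound into a single probabilistic inequality before splitting off the Markov event, but the argument is the same.
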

\begin{proof} Let $\omega$ be the string from Theorem \ref{thm:aba} and let $A:=\delta_\omega([n])$. Following the procedure above gives us, using Theorem \ref{thm:pair}, that $$\Pr(X \leq \lceil \sqrt{n} \rceil \cdot \lceil\sqrt{\log_2 n} + 1\rceil - 1 + |A| \cdot 3\log_2 n)=1-o(1).$$ Let $n_0(\varepsilon)$ be sufficiently large so that this expression is at least $1-\varepsilon/2$. Let $C$ be sufficiently large so that $\mathbb{E}[|A|]\leq \frac{C}{12} \sqrt{n/\log n}$ and $\lceil \sqrt{n} \rceil \cdot \lceil\sqrt{\log_2 n} + 1\rceil - 1 \leq \frac{C}2\sqrt{n \log_2 n}$ for all $n\geq n_0(\varepsilon)$. This is possible by Theorem \ref{thm:aba}.

Then, the event that $X>C\varepsilon^{-1}\sqrt{n\log_2 n}$ is contained in the union of the event that $X > \lceil \sqrt{n} \rceil \cdot \lceil\sqrt{\log_2 n} + 1\rceil - 1 + |A| \cdot 3\log_2 n$
and the event that $|A|>\frac{C}{6\varepsilon}\sqrt{n\log_2 n}$. The former has probability at most $\varepsilon/2$ by choice of $n_0(\varepsilon)$ and the latter has probability at most $\varepsilon/2$ by Markov's inequality. Hence, by the union bound, the inequality holds with probability at least $1-\varepsilon$, as desired.
\end{proof}

We end this section by some final comments. An immediate question given the above results is whether a more careful analysis could show that there exists a synchronizing word of length $O(\sqrt{n \log n})$ with high probability. For instance, this would follow if one could show that $|\delta_\omega([n])|$ from Theorem \ref{thm:aba} is concentrated around its expectation. However, we believe this to be false. On the other hand, it seems likely that a less structured string $\omega$, e.g. a random string, of length $\sqrt{n \log n}$ would reduce the number of possible states to $O(\sqrt{n/\log n})$ with high probability. However, we do not know how to approach this formally.

While upper bounds on the length of the shortest synchronizing word in a random automaton are now relatively well understood, finding matching lower bounds remains a challenging open problem. Proving a lower bound of $\Omega(n^{1/3})$ can be done roughly as follows. Fix a string $\omega$ of length $k$ and $k$ states $x_1, x_2, \dots, x_k$. For any state $x_i$, let $S_i$ denote the set of states reachable by starting in $x_j$ for some $j\neq i$ and following the transitions indicated by $\omega$. Then $S_i$ has size at most $(k-1)(k+1)<k^2.$ Thus, the probability that the trajectory starting at $x_i$ ever intersects $S_i$ is at most $k^3/n$. Picking $k$ sufficiently small so that $k^3/n<1/2,$ it follows that the probability that $\omega$ is synchronizing is at most $(k^3/n)^k=o(2^{-k}).$ Thus, by the union bound, with high probability no string of length $k$ is synchronizing. 

However, it appears that improving this by more than a constant factor requires new ideas. We pose this as an open problem.
\begin{problem} Show that the shortest synchronizing word in a random automaton is $\omega(n^{1/3})$ with high probability.
\end{problem}

\section{Random unary automata}

In this section, we prove Theorem \ref{thm:aaa}. In order to do this, let us consider the structure of the random unary automaton on $n$ states $([n],\{a\}, \delta_a)$ formed by restricting the above automaton to the alphabet $\{a\}$. We think of this as a directed graph $D$ on vertex set $[n]$ where for each $x\in [n]$ we add an arc to $\delta_a(x)$.

It is not too hard to convince oneself that any weakly connected component in $D$ contains a unique directed cycle (which may possibly be a self-loop) with the remaining vertices forming trees directed towards the cycle. We will refer to a vertex as \emph{cyclic} if it is contained in such a cycle, and \emph{non-cyclic} otherwise.

A vertex $x\in D$ is reachable after applying $k$ 'a':s in a row if it is the end-point of a walk in $D$ of length $k$. This can happen in two ways. Either $x$ is cyclic, or it is the end-point of some directed path in $D$ of length $k$. We will prove Theorem \ref{thm:aaa} by estimating the expected number instances of each of these.

Let us start by bounding the expected number of cyclic states. For a given state $x_0$ we can check whether or not it is cyclic by iteratively following the unique out-going arc from the current vertex to form the sequence  $x_0, x_1, 
\dots$ until we close a cycle, that is, the first step $t$ where $x_t=x_{t'}$ for some $t'<t$. The state $x_0$ is cyclic if and only if the the edge that closes the cycle goes back to $x_0$, that is $t'=0$. 

For any $t\geq 0$, let $P_t$ denote the probability that we have not yet closed a cycle after $t$ steps. We have
$$ P_t = \left( 1 - \frac{1}{n}\right)\left( 1 - \frac{2}{n}\right)\cdots\left( 1 - \frac{t-1}{n}\right) \leq \exp\left(-\frac{{t \choose 2}}{n} \right) \leq \exp\left(-\frac{(t-1)^2}{2n} \right). $$
By the law of total probability, we get
$$\Pr(x_0\text{ is cyclic}) = \sum_{t=0}^{n-1} P_t \cdot \frac{1}{n} \leq \frac{1}{n} \sum_{t=0}^\infty \exp\left(-\frac{(t-1)^2}{2n} \right), $$
where $$\sum_{t=0}^\infty \exp\left(-\frac{(t-1)^2}{2n} \right)\sim \int_0^\infty \exp\left(-\frac{t^2}{2n} \right)\, dt = \sqrt{2n}\int_0^\infty e^{-t^2}\,dt=\sqrt{2\pi n}.$$
This yields $\Pr(x_0\text{ is cyclic})\leq (1+o(1))\sqrt{2\pi/n}.$ Thus, the expected number of cyclic states is at most $(1+o(1))\sqrt{2\pi n}$.

Second, we can upper bound the expected number of vertices that are end-points of some path of length $k$ by the expected number of paths of length $k$, which is $n\cdot P_k \leq n \cdot \exp\left(-(k-1)^2/2n\right)$. This is clearly $o(1)$ if $k\geq 2\sqrt{n\log n}$. \qed

\section{Improving the first phase}

In this section, we prove Theorem \ref{thm:aba}. In order to do this, we consider the following random structure. Given a vertex set $V$ and a probability vector $(p_v)_{v\in V}$ of non-negative real numbers whose sum is $1$, we let $G(V, (p_v)_{v\in V})$ denote a random $1$-out-regular digraph on vertex set $V$ where the end-points of the $n$ edges are chosen independently with distribution according to $(p_v)_{v\in V}$.

\begin{theorem}\label{thm:inhom}
Given any vertex set $V$ and probability vector $(p_v)_{v\in V}$, the expected number of cyclic vertices in $G(V, (p_v)_{v\in V})$ is at most $(1+o(1))\sqrt{2\pi |V|}$, and the expected number of vertices that are end-points of paths of length $k$ is $O(|V|/k)$.
\end{theorem}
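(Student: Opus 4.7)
The plan is to extend the proof strategy of Theorem~\ref{thm:aaa} to the weighted model $G(V, (p_v))$, with the main new ingredient being Maclaurin's inequality: for any probability vector $(p_v)$ with $\sum_v p_v = 1$, the elementary symmetric polynomial satisfies $e_k(p) \leq \binom{|V|}{k}/|V|^k$, with equality iff $(p_v)$ is uniform. This allows us to reduce collision / distinct-samples probabilities for inhomogeneous i.i.d.\ draws to their uniform counterparts.

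For the cyclic-vertex count, start at each vertex $v \in V$ an outward walk $v = X_0, X_1, X_2, \ldots$ with each $X_{i+1}$ drawn independently from $(p_u)$. Then $v$ is cyclic iff this walk first returns to $v$ before any other repeat, so
$$ \Pr(v \text{ cyclic}) = p_v \sum_{t \geq 1} \Pr(X_1, \ldots, X_{t-1}\text{ distinct and }\neq v). $$
Summing over $v$ and using that $\sum_v p_v \Pr(X_1, \ldots, X_{t-1} \text{ distinct and }\neq v) = Q_{t-1}$, where $Q_j := \Pr(X_0, X_1, \ldots, X_j \text{ all distinct})$ for $X_0$ also drawn from $(p_u)$, we obtain $\mathbb{E}[\#\text{cyclic}] = \sum_{j \geq 0} Q_j$. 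Writing $Q_j = (j+1)!\, e_{j+1}(p)$ and applying Maclaurin's inequality yields $Q_j \leq \prod_{i=0}^{j}(1-i/|V|)$, so the same integral estimate as in Theorem~\ref{thm:aaa} gives $\mathbb{E}[\#\text{cyclic}] \leq (1+o(1))\sqrt{2\pi |V|}$.

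For the path-endpoint count, the first step is to bound the expected number of simple paths of length $k$ by summing over ordered distinct tuples:
$$ \mathbb{E}[\#\text{paths of length }k] = \sum_{(v_0, \ldots, v_k)\text{ distinct}} p_{v_1} p_{v_2} \cdots p_{v_k} = (|V|-k)\, k!\, e_k(p), $$
which by Maclaurin's inequality is at most $|V| \prod_{i=1}^{k}(1-i/|V|) \leq |V|\exp(-k^2/(2|V|))$. In the regime $k = \Omega(\sqrt{|V|\log |V|})$ this path-count bound is already sharp enough to give $O(|V|/k)$, while for very small $k$ the trivial bound $|V|$ suffices (absorbed into the big-$O$ constant). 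For intermediate $k$, one must exploit the $1$-out structure: since each vertex has a unique out-edge, the ``descending chain'' beneath a would-be endpoint is essentially determined, so distinct endpoints force chains that cannot share a vertex at the same relative position, and an amortized / union-bound argument over the (inhomogeneous) preimage tree at each $v$ converts the weak path-count bound into the target $O(|V|/k)$.

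The hard part will be the last step of the path-endpoint argument: in the intermediate range $k \asymp \sqrt{|V|}$, neither the path-counting bound nor the trivial bound $|V|$ is by itself $O(|V|/k)$, and one needs a genuinely structural argument — probably a Galton-Watson-type analysis of the preimage tree under the (inhomogeneous) branching distribution $\mathrm{Bin}(|V|, p_v)$ — to push the estimate through uniformly in $(p_v)$. The cyclic bound, by contrast, is a clean and direct application of Maclaurin.
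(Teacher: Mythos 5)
Your treatment of the cyclic vertices is correct and essentially the paper's argument in different clothing: the paper proves a smoothing claim stating that the uniform vector maximizes $\sum_{\emptyset\neq C\subseteq V}|C|!\prod_{y\in C}p_y = \sum_{j\ge 1} j!\,e_j(p)$, which is exactly the termwise content of Maclaurin's inequality $e_j(p)\le\binom{|V|}{j}/|V|^j$. Your reduction $\mathbb{E}[\#\text{cyclic}]=\sum_{j\ge0}(j+1)!\,e_{j+1}(p)$ and the subsequent comparison with the uniform case are sound, so this half is complete.

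The path-endpoint half, however, has a genuine gap, and it sits exactly where the theorem is actually used. You correctly observe that the Maclaurin path-count bound $|V|\exp(-k^2/2|V|)$ only beats $O(|V|/k)$ when $k\gtrsim\sqrt{|V|\log|V|}$, and that the trivial bound $|V|$ only helps for $k=O(1)$; but the application in Theorem \ref{thm:aba} invokes the estimate with $|V|=|A'|=\Theta(\sqrt n)$ and $k=\lceil\sqrt{\log n}\rceil$, i.e.\ squarely in the intermediate range you leave open. Moreover, the fix you sketch --- a Galton--Watson analysis of ``the preimage tree at each $v$'' --- cannot work vertex by vertex: a vertex $v$ with $p_v$ bounded away from $0$ has in-degree $\mathrm{Bin}(|V|,p_v)$ with mean $|V|p_v\gg1$, so its preimage tree is supercritical and $v$ is the endpoint of a length-$k$ path with probability bounded away from $0$; the $O(1/k)$ survival bound is a feature of \emph{critical} branching processes, and criticality here holds only on average over the target vertex. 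The paper's resolution is precisely to formulate the statement for a \emph{uniformly random} target set $S$ of size $\ell$ (Proposition \ref{prop:ind}), prove by induction on $k$ that no vertex is at distance exactly $k$ from $S$ with probability at least $q_k^\ell$ where $q_{k+1}=e^{-(1-q_k)}$ is the extinction recursion of the Poisson$(1)$ Galton--Watson process, and absorb the inhomogeneity by applying Jensen's inequality to the convex function $p(S)\mapsto(1-p(S)(1-q_{k-1}))^{n-\ell}$ together with the elementary bound of Lemma \ref{lem:standardineq}; the conclusion $1-q_k=O(1/k)$ then gives the $O(|V|/k)$ count. Without this averaging-over-the-target device and the exact tracking of the recursion, the claimed bound is not established, so as written your proof covers only the cyclic half of the theorem.
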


Before proving this theorem, let us see how it can be used to prove Theorem \ref{thm:aba}.

\begin{proof}[Proof of Theorem \ref{thm:aba}.]
Let $A':=\delta_{a^{\lceil \sqrt{n}\rceil}}([n])$ denote the set of states in a random automaton $\mathcal{A}=([n], \{a, b\}, \delta)$ that are reachable after a sequence of $\lceil \sqrt{n} \rceil$ 'a' inputs. Then $\mathbb{E}[|A'|]=O(\sqrt{n})$ by Theorem \ref{thm:inhom}. Moreover, observe that $\delta_{b(a^{\lceil \sqrt{n}\rceil})}(A')\subseteq A'.$ Hence, we can define a unary automaton $\mathcal{A}'$ on states $A'$ and with transition function $\delta'$ where $\delta'(x):=\delta_{b(a^{\lceil \sqrt{n}\rceil})}(x)$.

Conditioned on $\delta_a$ we have that the values of $\delta'(x)$ for $x\in A'$ are independent and identically distributed, as they are completely determined by the choice of $\delta_b(x)$. Hence $\mathcal{A}'$ satisfies the condition of Theorem \ref{thm:inhom}, and, as a consequence, the expected size of $\delta'^{(\lceil \sqrt{\log n}\rceil)}(A')$ is $O(\sqrt{|A'|}+|A'|/\sqrt{\log n})$. Averaging over the choice of $\delta_a$, simplifies this to $O(\sqrt{n/\log n})$. The statement follows by observing that $A=\delta_\omega([n])=\delta'^{(\lceil \sqrt{\log n}\rceil)} \circ \delta_{a^{\lceil \sqrt{n}\rceil}}([n])=\delta'^{(\lceil \sqrt{\log n}\rceil)}(A')$.
\end{proof}

In the remainder of the section, we will prove Theorem \ref{thm:inhom}. Let us start by considering the expected number of cyclic states/vertices. As we have already shown in the last section that the expected number of cyclic vertices have the desired bound for the probability vector $(1/n, \dots, 1/n),$ the following statement suffices to extend this bound to all probability vectors.

\begin{claim} For a given $n$, the uniform probability vector $(1/n, \dots, 1/n)$ is the vector that maximizes the expected number of cyclic vertices.
\end{claim}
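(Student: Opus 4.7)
The plan is to derive an explicit formula for the expected number of cyclic vertices as a symmetric polynomial in $(p_v)_{v\in V}$, and then run a pairwise averaging argument showing that this polynomial is Schur-concave on the probability simplex and hence maximized at the uniform vector.

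For the formula, I would use that in a $1$-out-regular digraph each cyclic vertex lies on a unique directed cycle (counting self-loops). Summing over cycles rather than vertices,
\begin{equation*}
\mathbb{E}[\# \text{ cyclic vertices}] \;=\; \sum_{C} |C| \cdot \Pr(C \text{ is present}).
\end{equation*}
A directed cycle on vertex set $S$ is present exactly when each vertex of $S$ picks the designated successor, an event of probability $\prod_{v \in S} p_v$. Since there are $(|S|-1)!$ directed cycles on a set of size $|S|$, this gives
\begin{equation*}
f\bigl((p_v)\bigr) \;:=\; \mathbb{E}[\# \text{ cyclic vertices}] \;=\; \sum_{\emptyset \neq S \subseteq V} |S|!\, \prod_{v \in S} p_v.
\end{equation*}

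For the averaging step, I would fix two vertices $u \neq v$, fix $p_w$ for $w \notin \{u,v\}$, and fix the sum $p_u + p_v = c$. Decompose $f$ according to $S \cap \{u,v\}$. Sets $S$ disjoint from $\{u,v\}$ contribute a constant. Sets with $\{u,v\} \subseteq S$ contribute $p_u p_v \cdot B$ where $B \geq 0$ depends only on the other coordinates. Sets containing exactly one of $u,v$ pair up under the involution $S \mapsto S \triangle \{u,v\}$, which preserves $|S|$ and the remaining factors; hence their combined contribution is a non-negative multiple of $p_u + p_v$, again constant. So along this slice, $f$ equals $\text{const} + p_u p_v \cdot B$, which is maximized under $p_u+p_v=c$ when $p_u = p_v$.

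Iterating this observation across all pairs (or invoking Schur concavity directly, noting that every probability vector majorizes the uniform one) forces the maximum to be attained at $(1/n,\dots,1/n)$. The main obstacle I anticipate is getting the pairing of the "exactly one of $u,v$" terms cleanly right: one has to argue that matching coefficients across the involution works out to a linear-in-$(p_u+p_v)$ expression. Once that symmetric polynomial identity is in place, the rest is routine.
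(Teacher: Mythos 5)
Your proposal is correct and follows essentially the same route as the paper: the identical formula $\sum_{\emptyset\neq S\subseteq V}|S|!\prod_{v\in S}p_v$, followed by the same pairwise grouping into $p_up_v\cdot B+(\text{linear in }p_u+p_v)+\text{const}$ with $B>0$ (the paper notes $B$ contains the constant $2$ from $S=\{u,v\}$), and the same averaging conclusion. The pairing you worried about is immediate since the polynomial is symmetric under swapping $p_u$ and $p_v$, so there is no real obstacle there.
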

\begin{proof}
The expected number of cyclic vertices can be written as $$\sum_{x\in V} \sum_{x\in C \subseteq V} (|C|-1)! \prod_{y\in C} p_y= \sum_{\emptyset\neq C\subseteq V} |C|! \prod_{y\in C} p_y.$$
This is a polynomial in $(p_v)_{v\in V}$ with non-negative coefficients. For any two distinct states $x, y$ we can group the monomials as $a \cdot p_x \cdot  p_y + b \cdot (p_x + p_y) + c$ where $a, b, c$ are polynomials with non-negative coefficients with variables $(p_z)_{z\neq x, y}.$ Moreover, as $a$ contains the monomial $2$ coming from the term $C=\{x, y\},$ it is strictly positive. This means that, for a given value of $p_x+p_y$, the expression is maximized only when $p_x=p_y$. Hence, subject to $\sum_x p_x = 1$, $(1/n, \dots, 1/n)$ is the unique maximizer.
\end{proof}

In order to bound the expected number of vertices that are end-points of long paths, the idea is to relate $G(V, (p_v)_{v\in V})$ to a Galton-Watson process where the number of children of a given node follows a Poisson distribution with parameter $1$.

Define the sequence $(q_k)_{k=0}^\infty$ according to $q_0=0$ and $q_{k+1}=\exp(-(1-q_k))$ for any $k\geq 0$. This denotes the probability that the Galton-Watson process reaches extinction before the $k$th generation.
\begin{fact} $q_k = 1-O(1/k)$.\end{fact}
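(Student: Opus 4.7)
The plan is to substitute $\varepsilon_k := 1 - q_k$, which turns the recursion into the cleaner form $\varepsilon_{k+1} = 1 - e^{-\varepsilon_k}$ with $\varepsilon_0 = 1$, and to show that $\varepsilon_k = O(1/k)$, which is exactly the claim.

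As a first step, I would verify that $(\varepsilon_k)$ is strictly decreasing and converges to $0$. Since $0 < 1 - e^{-x} < x$ for all $x \in (0,1]$, induction gives $0 < \varepsilon_{k+1} < \varepsilon_k \le 1$, and monotone convergence combined with the fact that $0$ is the only non-negative fixed point of $x \mapsto 1 - e^{-x}$ forces $\varepsilon_k \to 0$.

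To quantify the rate, I would study the increments of $1/\varepsilon_k$. Using the Taylor expansion $e^{-x} = 1 - x + x^2/2 + O(x^3)$, one obtains
$$\varepsilon_k - \varepsilon_{k+1} = e^{-\varepsilon_k} - (1 - \varepsilon_k) = \tfrac{1}{2}\varepsilon_k^2 + O(\varepsilon_k^3),$$
so that
$$\frac{1}{\varepsilon_{k+1}} - \frac{1}{\varepsilon_k} = \frac{\varepsilon_k - \varepsilon_{k+1}}{\varepsilon_k \varepsilon_{k+1}} = \frac{1}{2} + O(\varepsilon_k).$$
Since $\varepsilon_k \to 0$ by the previous step, there exists $k_0$ such that each of these increments is at least $1/3$ for $k \ge k_0$. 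Summing from $k_0$ to $k-1$ then gives $1/\varepsilon_k \ge (k-k_0)/3$, and hence $\varepsilon_k \le 3/(k-k_0) = O(1/k)$, as desired.

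There is no serious obstacle here; the main trick is noticing that taking reciprocals linearizes the dynamics of the map $x \mapsto 1 - e^{-x}$ near its fixed point $0$. The same argument in fact yields the sharper asymptotic $\varepsilon_k \sim 2/k$, but since only the $O(1/k)$ bound is needed in the sequel, I would not pursue the refinement.
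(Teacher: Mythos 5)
Your proof is correct. Note that the paper itself offers no proof of this Fact at all: it is stated as a known property of the critical Galton--Watson process with Poisson$(1)$ offspring (it is the classical Kolmogorov survival estimate, survival probability $\sim 2/(\sigma^2 k)$ with $\sigma^2=1$ here). Your argument is the standard self-contained way to verify it: the substitution $\varepsilon_k = 1-q_k$ gives $\varepsilon_{k+1}=1-e^{-\varepsilon_k}$, monotonicity plus the absence of positive fixed points gives $\varepsilon_k\to 0$, and the reciprocal trick $\frac{1}{\varepsilon_{k+1}}-\frac{1}{\varepsilon_k}=\frac12+O(\varepsilon_k)$ linearizes the dynamics and yields $\varepsilon_k=O(1/k)$ (indeed $\varepsilon_k\sim 2/k$ by Ces\`aro summation). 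All steps check out, including the Taylor computation $\varepsilon_k-\varepsilon_{k+1}=\frac12\varepsilon_k^2+O(\varepsilon_k^3)$ and the summation from a threshold $k_0$ beyond which the increments exceed $1/3$. This is a worthwhile addition, since it makes the note self-contained where the paper relies on an unreferenced fact.
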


We aim to show that the expected number of vertices in $G(V, (p_v)_{v\in V})$ that are end-points of paths of length $k$ is at most $n(1-q_k).$ It turns out to be easier to prove a strengthened version, from which Theorem \ref{thm:inhom} directly follows. For a digraph $G$, a vertex $v$ and a vertex set $S$, let $d(v\rightarrow S)$ denote the length of the shortest path from $v$ to $S$ in $G$ (or $\infty$ if no such path exists).

\begin{proposition}\label{prop:ind} Let $S\subseteq V$ denote a random set of size $\ell$ chosen uniformly at random. With probability at least $q_k^\ell$, $G(V, (p_v)_{v\in V})$ contains no vertex $v\in V$ such that $d(v\rightarrow S) = k.$
\end{proposition}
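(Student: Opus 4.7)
The plan is induction on $k$, with the claim quantified over all finite vertex sets, probability vectors, and subset sizes. Write $L_j(S) := \{v : d(v\to S) = j\}$ for the $j$-th level of the backward BFS from $S$. The base case $k = 0$ is immediate since $L_0(S) = S$, and the statement reduces to $\mathbbm{1}[\ell = 0] \geq 0^\ell$.

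For the inductive step $(k \geq 1)$, I would peel off the first layer $L_1 = L_1(S) = \{v \notin S : \delta(v)\in S\}$ and reduce the emptiness of $L_k(S)$ to a level-$(k-1)$ statement on a smaller graph. The structural input is that, because $G$ is $1$-out regular, every forward trajectory from $v\notin S$ must visit $L_1$ exactly one step before first hitting $S$. Consequently, for $k \geq 1$, $L_k(S)$ equals the set of vertices at backward distance $k-1$ from $L_1$ in the residual graph $G^*$ on $V' := V\setminus S$, defined by $\delta^*(v) := \delta(v)$ for $v\in V'\setminus L_1$ (with an irrelevant fresh draw on $L_1$ that does not affect the BFS from $L_1$).

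The distributional input is: the indicators $\mathbbm{1}[\delta(v)\in S]$, $v\in V'$, are i.i.d.\ $\mathrm{Bernoulli}(P_S)$ with $P_S := \sum_{s\in S} p_s$, so given $(S, |L_1|=m)$ the set $L_1$ is uniform over size-$m$ subsets of $V'$; and given $L_1$, the values $\delta(v)$ for $v \in V'\setminus L_1$ are i.i.d.\ with the renormalized distribution $p'_u := p_u/(1-P_S)$ on $V'$, independently of $L_1$. Hence the pair $(G^*, L_1)$ conditional on $(S, |L_1|=m)$ has exactly the joint distribution required by the inductive hypothesis at level $k-1$ applied to $G(V',(p'_u))$, which yields
$$
\Pr\bigl(L_k(S) = \emptyset \,\big|\, S,\, |L_1| = m\bigr) \;\geq\; q_{k-1}^{m}.
$$

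To close the induction, I would use that $|L_1|\mid S \sim \mathrm{Binomial}(n-\ell, P_S)$, so $\mathbb{E}[q_{k-1}^{|L_1|}\mid S] = (1-\alpha P_S)^{n-\ell}$ with $\alpha := 1-q_{k-1}\in[0,1]$. Averaging over $S$ and invoking convexity of $x\mapsto(1-\alpha x)^{n-\ell}$ together with $\mathbb{E}[P_S] = \ell/n$, Jensen gives a lower bound of $(1-\alpha\ell/n)^{n-\ell}$; the standard log inequality $(1-x)^m\geq\exp(-mx/(1-x))$ with $x = \alpha\ell/n$, $m = n-\ell$, together with $\alpha\leq 1$, then brings this down to $e^{-\alpha\ell} = q_k^\ell$ by the recurrence $q_k = e^{-(1-q_{k-1})}$. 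The main step I expect to require care is the combined structural-and-distributional identity: that $L_1$, conditioned on its size, is uniform over the subsets of $V\setminus S$ and that the residual graph is an independent copy of $G(V',(p'_u))$. After that, the final estimate is a Jensen step plus two routine log inequalities, with the uniform-$S$ assumption entering precisely to make $\mathbb{E}[P_S] = \ell/n$ exact.
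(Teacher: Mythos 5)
Your proposal is correct and follows essentially the same route as the paper: induction on $k$, peeling off the first BFS layer $S'=L_1(S)$, using that its size is $\mathrm{Binomial}(n-\ell, p(S))$ to get the factor $\bigl(1-p(S)(1-q_{k-1})\bigr)^{n-\ell}$, then Jensen over the uniform choice of $S$ and an elementary logarithmic inequality to reach $e^{-\ell(1-q_{k-1})}=q_k^\ell$. You are in fact somewhat more explicit than the paper about why the induction hypothesis applies to the residual graph (uniformity of $L_1$ given its size, independence and renormalization of the remaining edges, irrelevance of the fresh draws on $L_1$), which is the one point the paper glosses over with the word ``coupling.''
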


To show this, we use the following inequality.
\begin{lemma}\label{lem:standardineq} For any $0< x \leq 1$ and integers $0 \leq a \leq b$, we have $$ \left(1-\frac{a}b x\right)^{b-a} \geq e^{-ax}. $$
\end{lemma}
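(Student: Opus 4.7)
The plan is to take logarithms and reduce the claim to showing
$$(b-a)\log\!\bigl(1 - \tfrac{a}{b}x\bigr) + ax \;\geq\; 0$$
on $[0,1]$. Set $f(x) := (b-a)\log\!\bigl(1-\tfrac{a}{b}x\bigr) + ax$. The cases $a=0$ and $a=b$ can be disposed of immediately: in the former $f\equiv 0$; in the latter the left-hand side of the original inequality is $0^0 = 1 \geq e^{-bx}$ by convention. So assume $0 < a < b$, in which case $1 - \tfrac{a}{b}x \geq 1 - \tfrac{a}{b} > 0$ for all $x \in [0,1]$ and $f$ is well defined.

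Next, I would differentiate. A short calculation gives
$$f'(x) \;=\; a \;-\; \frac{a(b-a)/b}{1 - ax/b} \;=\; a\cdot\frac{(b-ax) - (b-a)}{b-ax} \;=\; \frac{a^2 (1-x)}{b-ax}.$$
Since $0 < a < b$ and $x \leq 1$, the denominator $b-ax$ is positive and the numerator is non-negative, so $f'(x) \geq 0$ throughout $[0,1]$. Combined with $f(0) = 0$, this yields $f(x) \geq 0$ on $[0,1]$, and exponentiating gives the claim.

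I do not expect a real obstacle here; the only care required is to track the edge cases (the boundary values $a=0$, $a=b$, and the degenerate expression $b-ax$ at $x=1$ when $a=b$) and to note that the hypothesis $x \leq 1$ is precisely what makes $f'$ non-negative — without it, $f$ could turn around. An alternative one-line derivation uses the standard inequality $-\log(1-y) \leq \tfrac{y}{1-y}$ for $0 \leq y < 1$ with $y = ax/b$, followed by $b-ax \geq b-a$ (valid since $x \leq 1$); but I find the monotonicity argument cleaner because it avoids invoking any auxiliary inequality.
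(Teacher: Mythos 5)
Your proof is correct, and it takes a slightly different route from the paper. The paper exploits log-concavity: since $(b-a)\log\bigl(1-\tfrac{a}{b}x\bigr)+ax$ is concave in $x$, its minimum on $[0,1]$ is attained at an endpoint, so it suffices to check $x=0$ (trivial) and $x=1$, the latter via the auxiliary inequality $\bigl(1-\tfrac{a}{b}\bigr)^{-1}=1+\tfrac{a}{b-a}\leq e^{a/(b-a)}$. You instead differentiate the same logarithmic difference and show $f'(x)=\tfrac{a^2(1-x)}{b-ax}\geq 0$ on $[0,1]$, anchoring at $f(0)=0$; the computation is right and the edge cases $a=0$, $a=b$ are handled adequately (for $a=b$ the left-hand side is $(1-x)^0=1$ for any $x$, not only the $0^0$ instance at $x=1$, but the conclusion is the same). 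The trade-off is minor: the paper's concavity argument avoids any differentiation but needs two endpoint checks and one standard exponential bound, while your monotonicity argument needs only one anchor point but requires computing and sign-checking $f'$ — and, as you correctly observe, the hypothesis $x\leq 1$ enters precisely there, whereas in the paper it enters at the endpoint $x=1$. Both proofs are complete and of comparable length.
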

\begin{proof} By log-concavity, it suffices to verify the inequality for $x=0$ and $x=1$. The inequality clearly holds for $x=0$. Moreover, as $\left(1 - \frac{a}b\right)^{-1} = 1+\frac{a}{b-a} \leq e^{a/(b-a)}$, where the last step follows by the inequality $e^x\geq 1+x$, we find that that $\left(1-\frac{a}b x\right)^{b-a} \geq e^{-a}$.
\end{proof}

\begin{proof}[Proof of Proposition \ref{prop:ind}.] We show this by induction on $k$. The statement is trivially true for $k=0$. Let $k\geq 1$, and assume the statement holds for all $k'<k$.

Let us consider the probability of the prescribed event for a given set $S$. Each vertex of $V\setminus S$ generates its out-going edge independently so that it connects to $S$ with probability $p(S)$. Let $S'$ denote the set of these vertices. Conditioned on the event that a vertex $v\in [n]\setminus S$ is not in $S'$, it connects to a vertex in $[n]\setminus S$ according to the probability distribution $(p_i/(1-p(S)))_{i\in V\setminus S}.$ 

Thus, by coupling this to the $1$-out-regular graph $G'=G(V\setminus S, ( p_v/(1-p(S)))_{v\in V\setminus S}),$ we see that the probability that there exists a vertex in $G(p_1, \dots, p_n)$ with distance exactly $k$ to $S$ is the same as the probability that there exists a vertex in $G'$ with distance exactly $k-1$ to $S'$. Hence, by the induction hypothesis, we get
$$\Pr\left( \neg \exists v : d(v\rightarrow S) = k | S \right) \geq \sum_{\ell'=0}^{n-\ell} 
{n-\ell \choose \ell'} p(S)^{\ell'}(1-p(S))^{n-\ell-\ell'} q_{k-1}^{\ell'} = \left( 1-p(S)(1-q_{k-1})\right)^{n-\ell}.$$
Observe that the right-hand side is convex in $p(S)$. As $\mathbb{E}[p(S)] = \ell/n$ for a uniformly chosen $\ell$-set $S$, it follows by Jensen's inequality that
$$\Pr\left( \neg \exists v : d(v\rightarrow S) = k \right) \geq \left( 1-\frac{\ell}{n}(1-q_{k-1})\right)^{n-\ell} \geq \exp(-\ell(1-q_{k-1})),$$
where, in the last step, we use Lemma \ref{lem:standardineq}. As $e^{-\ell(1-q_{k-1})} = q_k^\ell$, this implies the induction statement, which concludes the proof.
\end{proof}

\section{Synchronizing two states}

In this section, we prove Theorem \ref{thm:pair}. Let us start by introducing some additional notation. First, for any two states $x, y$, we let $d(x, y)$ denote the length of the shortest word $\omega$ such that $\delta_\omega(x)=\delta_\omega(y)$. If no such word exists we put $d(x, y)=\infty$. To aid the analysis below, we will independently color each pair in $[n]\times\{a, b\}$ either red with probability $p$ or blue with probability $1-p$ where $p:=1/\log^2 n$. For any two states $x, y$, we let $d_b(x, y)$ denote the length of the shortest word $\omega$ such that $\delta_\omega(x)=\delta_\omega(y)$ and where the two trajectories only use blue transitions. Again, we put $d_b(x, y)=\infty$ if no such word exists. Let $$L_b:=\{(x, y)\in [n]^2 : d_b(x, y) > 2 \log n\}.$$
Note in particular that the outcome of $L_b$ is independent of $\delta_c(x)$ for any red pair $(x, c)$.

The proof of Theorem \ref{thm:pair} uses a double exposure argument. We first uncover $\delta_c(x)$ for all blue pairs $(x, c)$. This information lets us determine $L_b$. A second moment argument shows that, with high probability, most pairs of states are not in $L_b$. Second, for any pair of states $(x, y)$ we show using an exploration process that, with probability $1-O(1/n^2)$ there are many words $\omega$ of length $O(\log n)$ such that both, say, $(\delta_\omega(x), a)$ and $(\delta_\omega(y), a)$ are red. By uncovering the image of all red transitions, it follows by very high probability that there is some $\omega$ for which $(\delta_{\omega a}(x), \delta_{\omega a}(y))$ is not in $L_b$, thus we can synchronize $x$ and $y$ by adding an additional $2\log n$ steps. Theorem \ref{thm:pair} follows relatively easily using Markov's inequality.

\begin{proposition}\label{prop:weakpair} For any fixed pair of states $x, y$, $\Pr(d(x, y) \leq  2 \log_2 n )= 1-o(1). $
\end{proposition}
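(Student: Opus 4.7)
The plan is to view $d(x,y)$ as the BFS distance from $(x,y)$ to the diagonal $D := \{(z,z) : z \in [n]\}$ in the ``pair digraph'' $H$ on vertex set $[n]^2$, whose two out-edges from $(u,v)$ are $(\delta_a(u),\delta_a(v))$ and $(\delta_b(u),\delta_b(v))$. Informally this is BFS in a random $2$-out digraph on $N = n^2$ vertices toward a target of size $T = n$, so one expects depth $\log_2(N/T) = \log_2 n$ to hit, with $2\log_2 n$ giving a comfortable factor-of-two margin consistent with the shortest-path-in-random-graph paradigm mentioned in the introduction.

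To make this rigorous, I would run BFS from $(x,y)$ in $H$, revealing the transitions $\delta_c(u)$ lazily so that each first reveal is uniform on $[n]$ and independent of the past. Write $F_k$ for the frontier at depth $k$ and $S_k = \bigcup_{j\leq k}F_j$, and split the analysis into a \emph{growth phase} of depth $K_1 := \lceil \tfrac12\log_2 n - 2\log_2\log n\rceil$ and a \emph{capture phase} from $K_1$ to $K := 2 \log_2 n$. During the growth phase $|S_k| = O(\sqrt n /\log^2 n)$, so a first-moment/union bound argument shows that each step is well-approximated by a binary branching process: with probability $1-o(1)$, $|F_{K_1}|=\Omega(\sqrt n / \log^2 n)$ and no candidate has landed on $D$. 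In the capture phase, the key observation is that even when some transitions have already been revealed, whenever at least one of $\delta_c(u)$, $\delta_c(v)$ is fresh the candidate pair $(\delta_c(u),\delta_c(v))$ lands in $D$ with probability exactly $1/n$, independent of all prior exposures. Letting $N_k$ denote the number of candidates at step $k$ with at least one fresh coordinate, the per-step conditional miss probability is at most $(1-1/n)^{N_k}$, and a soft dichotomy — the frontier either keeps expanding geometrically, in which case $N_k \geq 2|F_{k-1}|(1-o(1))$, or it has already saturated, in which case $|F_k| = \Omega(n)$ and each step still contributes $\Omega(n)$ fresh reveals before all $2n$ transitions $\delta_c(u)$ are exposed — gives $\sum_{k > K_1} N_k = \Omega(n \log n)$ with probability $1-o(1)$. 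Hence the cumulative miss probability is at most $(1-1/n)^{\Omega(n\log n)} = o(1)$.

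The main obstacle is the capture phase: once a large fraction of the $2n$ possible transitions has been revealed, the lazy revelation is no longer effectively independent and the binary branching coupling breaks down. The ``at least one fresh coordinate suffices'' observation is the crucial simplification that decouples the per-step hit probability from the detailed BFS structure and reduces the whole argument to the comparatively soft lower bound on $\sum_k N_k$, which can be obtained by elementary bookkeeping rather than a delicate structural analysis of the correlated exploration.
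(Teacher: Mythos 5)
There is a genuine gap in the capture phase, and it is fatal to the stated conclusion. Your miss-probability bound $(1-1/n)^{N_k}$ only credits candidates that reveal at least one \emph{fresh} entry of the transition table, and each such candidate consumes at least one previously unrevealed entry of that table. But the table $\delta:[n]\times\{a,b\}\to[n]$ has only $2n$ entries in total, so over the entire exploration $\sum_k N_k\leq 2n$ — the claimed bound $\sum_{k>K_1}N_k=\Omega(n\log n)$ is not just unproven but impossible. (If instead you count $N_k$ statically, before processing the step, the bound $(1-1/n)^{N_k}$ itself fails: many candidates in one step can share a single fresh coordinate, e.g.\ pairs $(u,v_1),\dots,(u,v_m)$ all waiting on $\delta_a(u)$, and then the step's miss probability can be as large as $1-1/n$ rather than $(1-1/n)^m$.) Consequently the best your argument can deliver is a miss probability of at least $(1-1/n)^{2n}\to e^{-2}$, i.e.\ $\Pr(d(x,y)\leq 2\log_2 n)\geq 1-e^{-2}-o(1)$, a constant bound rather than $1-o(1)$. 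The difficulty you correctly identified — that after $\Theta(n)$ reveals the exploration is no longer effectively independent — is real, but the ``one fresh coordinate suffices'' device does not overcome it; the useful randomness genuinely runs out, and the remaining $\Theta(n^2)$ candidate words at depth $2\log_2 n$ are built from already-revealed entries, so their contribution cannot be extracted by a martingale/product argument of this form. Your growth phase, by contrast, is fine and closely mirrors the exploration the paper uses elsewhere (in Proposition 4.4).

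The paper sidesteps this by not exploring at all: it applies the second moment method to $X=\sum_\omega X_\omega$, where $\omega$ ranges over all $2^k$ words of length $k=\lfloor 2\log n\rfloor$ and $X_\omega$ indicates that the two $\omega$-trajectories merge exactly at their endpoints with no other coincidences. Then $\mathbb{E}X=\Theta(2^k/n)=\Theta(n)$, and the correlation between $X_\omega$ and $X_{\omega'}$ is controlled by observing that, beyond the trivial $1/n$ term, positive correlation only arises when an $\omega'$-trajectory re-enters the $O(k)$ states used by $\omega$, an event of probability $O(k^4/n^2)$; this gives $\mathrm{Var}(X)=o((\mathbb{E}X)^2)$ and Chebyshev finishes. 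If you want to salvage an exploration-based proof, you would need to show that the set of \emph{distinct reachable pairs} at depth $k$ grows to size $\omega(n)$ and is sufficiently spread out to meet the diagonal — which, once made precise, is essentially the same second moment computation.
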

\begin{proof} We may assume that $x\neq y$, as otherwise the statement is obviously true. For any $\omega \in \{a, b\}^k$, let $X_\omega$ denote the indicator function for the event that the sequences of states formed by following $\omega$ from $x$ and $y$ respectively have the same end-point, but where the states involved are otherwise pair-wise distinct. Let $X=\sum_\omega X_\omega$. We will show using the second moment method that $\Pr(X>0)=1-o(1).$

By direct counting, we have $\mathbb{E}X_\omega = (n-2)(n-3)\cdots (n-2k)/n^{2k} = (1-O(\frac{k^2}{n}) )\frac{1}{n}$. Moreover, for any distinct $\omega, \omega'\in \{a, b\}^k$, we claim that $\Pr(X_{\omega'}=1|X_\omega=1) \leq \frac{1}{n} + O(\frac{k^4}{n^2}).$ To see this, condition on $X_\omega=1$ and the set of states $A$ that appears along the trajectories from $x$ to $\delta_\omega(x)$ and from $y$ to $\delta_{\omega}(y)$. Consider the sequences of states
$$(x, y)=(x_0, y_0), (x_1, y_1), \dots, (x_{k-1}, y_{k-1})$$
starting at $x$ and $y$ and following the transitions as indicated by $\omega'$ up until the second to last character. Letting $0\leq \ell < k$ denote the length of the longest common prefix of $\omega$ and $\omega'$, we know that $(x_0, y_0), \dots (x_\ell, y_\ell)$ deterministically follow the corresponding sequences in $A$ formed by $\omega$. Let us consider three cases for how the probability that $X_{\omega'}=1$ depends on the behavior of the $\omega'$-sequences.
\begin{enumerate}
    \item If either of the sequences $x_0, x_1, \dots, x_{k-1}$ or $y_0, y_1, \dots, y_{k-1}$ contains a state more than once, or the sequences have a common state, then $X_{\omega'}=1$ with probability $0$.
    \item Otherwise, if both sequences $x_{\ell+1}, \dots, x_{k-1}$ and $y_{\ell+1}, \dots, y_{k-1}$ contain states in $A$, then (trivially) $X_{\omega'}=1$ with probability at most $1$.
    \item Otherwise, the probability that $X_{\omega'}=1$ is at most $1/n$ (as at least one of $x_k$ and $y_k$ is still chosen uniformly at random, given everything else observed). 
\end{enumerate}
Thus, we can bound the conditional probability that $X_{\omega'}=1$ given $X_\omega=1$ by $1/n$ plus the probability that Case $2.$ occurs, which can be readily bounded by $O((k^2/n)^2)$.

It follows that $\mu:=\mathbb{E}X= \sum_\omega \mathbb{E}X_\omega = 2^k/n - O(2^k k^2/n^2)$ and $\mathbb{E}X^2=\sum_{\omega}\sum_{\omega'}\mathbb{E}X_\omega X_{\omega'} = \mu + \sum_{\omega}\sum_{\omega'\neq \omega}\Pr(X_{\omega'}=1|X_\omega=1)\Pr(X_\omega=1)\leq \mu+2^{2k}/n^2 + O(2^{2k} k^4/n^3),$ which gives Var$(X)=O(\mu + \mu^2 k^4/n)$. Now, letting $k=\lfloor 2\log n \rfloor$ so that $\mu=\Theta(n)$, it follows by Chebyshev's inequality that $\Pr(X>0) \geq 1-O(\frac{\log^4 n}{n})$, as desired.
\end{proof}

Observe that as $p=o(1/\log n)$, we should expect most paths of length $O(\log n)$ to consist only of blue edges. Combining this with the previous proposition immediately implies information about the size of $L_b$.

\begin{corollary}\label{cor:Lb} $\Pr(|L_b| \geq n^2/2 )=o(1)$
\end{corollary}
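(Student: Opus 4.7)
The plan is to apply Markov's inequality to $|L_b|$, so the corollary reduces to the per-pair bound $\Pr((x,y)\in L_b)=o(1)$ for every fixed ordered pair of distinct states $x,y$ (the $n$ diagonal pairs trivially contribute $o(n^2)$). The paper's hint that ``$p=o(1/\log n)$, so most paths of length $O(\log n)$ consist only of blue edges'' is the guiding idea: I would run a two-stage exposure that exploits the fact that the coloring is independent of the transition function $\delta$.

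In the first stage, reveal $\delta$ only. Setting $k:=\lfloor 2\log_2 n\rfloor$ as in the proof of Proposition \ref{prop:weakpair}, that proof actually establishes the stronger statement $\Pr(\sum_{\omega\in\{a,b\}^k} X_\omega>0)=1-o(1)$. On this event, select (say) the lexicographically smallest $\omega^*\in\{a,b\}^k$ with $X_{\omega^*}=1$; this makes $\omega^*$ a deterministic function of $\delta$. Because $X_{\omega^*}=1$ forces the states along the two length-$k$ trajectories from $x$ and $y$ to be pairwise distinct (except at the common endpoint), the $2k$ transitions used along the two trajectories involve $2k$ pairwise distinct $(\text{state},\text{character})$ pairs.

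In the second stage, reveal the coloring. Since it is independent of $\delta$ (and hence of $\omega^*$ and of the list of $2k$ pairs above), the probability that all these $2k$ specific pairs are blue equals $(1-p)^{2k}$. With $p=1/\log^2 n$ and $2k\le 4\log_2 n$, this is $1-O(1/\log n)=1-o(1)$. When this blueness event occurs, $\omega^*$ witnesses $d_b(x,y)\le k\le 2\log n$, so $(x,y)\notin L_b$. A union bound over the two $o(1)$ failure events gives $\Pr((x,y)\in L_b)=o(1)$, hence $\mathbb{E}|L_b|=o(n^2)$, and Markov's inequality closes the corollary.

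The only point worth flagging — the ``main obstacle,'' such as it is — is keeping the decoupling honest: $\omega^*$ must be measurable with respect to $\delta$ alone so that the coloring stays independent of the specific pairs we need to be blue, and those pairs must genuinely be distinct so that their blue indicators are independent. Both are automatic once $\omega^*$ is selected via the indicators $X_\omega$, since $X_\omega=1$ already bakes in the pairwise distinctness of the underlying states, so no new second-moment or covariance work is needed beyond what Proposition \ref{prop:weakpair} already supplies.
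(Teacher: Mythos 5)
Your proposal is correct and follows essentially the same route as the paper: combine Proposition \ref{prop:weakpair} with the observation that the $\le 2k$ transitions along a witnessing word are all blue with probability $(1-p)^{2k}=1-o(1)$ (the coloring being independent of $\delta$), then finish with Markov's inequality. Your extra care about selecting $\omega^*$ measurably with respect to $\delta$ just makes explicit the decoupling the paper uses implicitly.
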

\begin{proof}
Given that $d(x, y)\leq k$ for some states $x, y$ it follows that $d_b(x, y)\leq k$ provided that all $\leq 2k$ edges involved are blue. Thus $$\Pr(d_b(x, y)\leq k\,|\,d(x, y)\leq k) \geq (1-p)^{2k}.$$ Applying Proposition \ref{prop:weakpair} it follows that $$\Pr(d_b(x, y)\leq 2\log n) \geq \Pr(d(x, y)\leq 2\log n)(1-p)^{4\log n} = 1-o(1).$$ By Markovs inequality we have $$\Pr(|L_b| \geq n^2/2 )\leq \mathbb{E}|L_b|/(n^2/2)=\sum_{x,y} \Pr(d_b(x, y)> 2\log n)/(n^2/2) = o(n^2/n^2)=o(1).$$ 
\end{proof}

\begin{proposition}\label{prop:explore} For any fixed pair of states $x, y$, with probability $1-O(1/n^2)$, one of the following occurs
\begin{enumerate}[(i)]
    \item $d(x, y)\leq 2.5 \log n$,
    \item $|L_b|\geq n^2/2$.
\end{enumerate}
\end{proposition}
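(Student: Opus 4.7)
The plan is a double-exposure argument. In the first phase I reveal the colour of every pair $(z,c)\in[n]\times\{a,b\}$ together with the image $\delta_c(z)$ for every blue pair; this information determines both $L_b$ and the entire "blue sub-automaton". If the revealed colouring gives $|L_b|\ge n^2/2$ then case (ii) already holds, so I may condition on an arbitrary phase-one outcome with $|L_b|<n^2/2$ and show that over the remaining randomness (the red-transition values) the event $d(x,y)>2.5\log_2 n$ has conditional probability $O(1/n^2)$.

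Starting from $(x,y)$ I grow a breadth-first search on state-pairs in the "blue product graph": a pair $(u,v)$ with $u\neq v$ has child $(\delta_c(u),\delta_c(v))$ via letter $c\in\{a,b\}$ precisely when both $(u,c)$ and $(v,c)$ are blue, in which case that child is already known from phase one. I prune aggressively, killing any branch whose pair reaches the diagonal or whose $u$- or $v$-coordinate has appeared earlier in the tree. Each candidate extension survives with probability $(1-p)^2=1-O(\log^{-2}n)$, so the expected branching factor is $2-o(1)$; a standard branching-process estimate (together with pruning) shows that down to depth $k:=\lceil\tfrac12\log_2 n\rceil$ the BFS yields a leaf set $\Pi$ of size $|\Pi|\ge \tfrac12\sqrt{n}$ whose $2|\Pi|$ coordinates are pairwise distinct, except on a phase-one event of probability $o(1/n^2)$.

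Declare a leaf $(u,v)\in\Pi$ \emph{good} if both $(u,a)$ and $(v,a)$ are red. Conditional on phase one and $\Pi$, these are independent events across the leaves (the coordinates being distinct) and each has probability $p^2=1/\log^4 n$, so the number $M$ of good leaves is a sum of independent Bernoullis of mean at least $\tfrac12\sqrt{n}/\log^4 n$; a Chernoff bound gives $M\ge 3\log_2 n$ except with probability $o(1/n^2)$. Finally I reveal the red images: by coordinate distinctness, the $M$ pairs $(\delta_a(u_i),\delta_a(v_i))$ are mutually independent and each uniform on $[n]^2$, hence
\[
\Pr\bigl[\text{every good-leaf image lies in } L_b\bigr]\le (|L_b|/n^2)^{M}\le 2^{-3\log_2 n}=n^{-3}=o(1/n^2).
\]
On the complement there is a good word $\omega$ of length $k$ with $(\delta_{\omega a}(x),\delta_{\omega a}(y))\notin L_b$; appending a blue synchronizing word of length at most $2\log_2 n$ for that pair gives $d(x,y)\le k+1+2\log_2 n\le 2.5\log_2 n$, which is case (i).

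The hardest step is the exploration bookkeeping: one must argue simultaneously that the BFS survives to depth $k$ with $\Theta(\sqrt n)$ leaves and that all of their coordinates are distinct, with failure probability as small as $O(1/n^2)$. Survival and the subsequent Chernoff bound are textbook branching-process estimates; the delicate part is coordinate distinctness, since a naive birthday union bound over $|\Pi|=\Theta(\sqrt n)$ coordinates gives only a constant error probability. The remedy is that the killing rule discards a colliding branch the first time it would reuse a coordinate, so that each newly extended child is essentially a uniform sample in $[n]\setminus(\text{seen})$; tracking this level by level keeps $|\Pi|$ close to $2^k$ while bounding the probability of ever losing distinctness at the level $k\le \tfrac12\log_2 n$ by $o(1/n^2)$. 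Once the exploration is under control, the Chernoff step and the final union bound over good leaves are routine.
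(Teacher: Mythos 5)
Your overall strategy matches the paper's: explore the product graph from $(x,y)$ to depth $\approx\tfrac12\log_2 n$ to obtain $\Omega(\sqrt n)$ coordinate-disjoint word endpoints, use Chernoff to find many endpoints whose $a$-pairs are both red, and then use the still-uniform red images to hit the complement of $L_b$ and finish with a blue word of length $2\log n$. However, there is a genuine gap in your exploration step. You run the BFS in the \emph{blue} product graph, where a pair $(u,v)$ has a child via letter $c$ only if both $(u,c)$ and $(v,c)$ are blue. This is a branching process with offspring mean $2(1-p)^2$ but with a \emph{positive extinction probability}: already at the root, both letters are blocked with probability about $(2p)^2=4/\log^4 n$, and more generally the process dies out before depth $k$ with probability $\Omega(p^2)=\Omega(\log^{-4}n)$. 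This is vastly larger than the $O(1/n^2)$ failure probability you need, so the claim that $|\Pi|\ge\tfrac12\sqrt n$ ``except on a phase-one event of probability $o(1/n^2)$'' is false, and no branching-process estimate can rescue it. The paper avoids this entirely by exploring the \emph{uncoloured} product graph (every surviving word always spawns two candidate extensions, pruned only on coordinate collisions, which cost $O(\text{visited}/n)$ each), and by consulting colours only at the leaves.

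A second, more repairable problem is your exposure order. You reveal the colour of \emph{every} pair and all blue images in phase one, and then condition on an arbitrary such outcome with $|L_b|<n^2/2$; but conditional on that information the colours of the leaf pairs $(u,a),(v,a)$ are deterministic, so the subsequent claims that they are ``independent events \dots each has probability $p^2$'' are not meaningful, and conditioning on $\{|L_b|<n^2/2\}$ could in principle bias those colours. The fix is the paper's ordering: run the exploration first (revealing only the transitions it queries), then reveal the colours of the leaf $a$-pairs, and only afterwards reveal the remaining colours and blue images to determine $L_b$, so that the red images of the good leaves remain uniform at the final step. Your Chernoff step and the bound $(|L_b|/n^2)^M\le n^{-3}$ are fine once the exploration is set up correctly.
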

\begin{proof} Given distinct states $x, y$, we consider the following exploration process. We initially let $S_0=\{""\}$ be the set containing the empty string and mark $x$ and $y$ as visited. For each $t=1, \dots \lfloor \frac12 \log n\rfloor-1$, we generate the set $S_t$ by, for each $\omega \in S_{t-1}\times \{a, b\}$, checking whether $\delta_\omega(x)$ and $\delta_\omega(y)$ have previously been visited in the exploration. If both were previously unvisited, we add $\omega$ to $S_t$ (otherwise, we discard $\omega$). In either case, we mark both $\delta_\omega(x)$ and $\delta_\omega(y)$ as visited, and continue.

We claim that, with probability $1-O(1/n^2)$, the process will end up with the set $S_{\lfloor \frac12 \log n\rfloor-1}$ having size $\Omega(\sqrt{n})$. Given any history of the process, the probability that a string $\omega$ is discarded is at most $2/n$ times the number of previously visited vertices. This means that, with probability at least $1-O(1/n^2)$, we will discard at most one string while generating, say, $S_1$ and $S_2$. Similarly, using the more liberal bound of $O(\sqrt{n})$ vertices visited throughout the process, the probability of discarding a string in any later step is $O(1/\sqrt{n})$, so, with probability $1-O(1/n^2)$, at most three strings will be discarded throughout the whole process. It is not too hard to convince oneself that, under those restrictions $|S_t|\geq 2^{t-2}$ for all $t$. In particular $|S_{\lfloor \frac12 \log n\rfloor-1}|=\Omega(\sqrt{n}).$

Let us now check for all $\omega\in S_{\lfloor \frac12 \log n\rfloor-1}$ which color the pairs $(\delta_\omega(x), a)$ and $(\delta_\omega(y), a)$ have. Assuming the exploration process succeeded, it follows by Chernoff bounds that, with probability $1-\exp(-\Omega(\sqrt{n}p^2 ))$, there are $\Omega(\sqrt{n}p^2)$ strings $\omega\in S_{\lfloor \frac12 \log n\rfloor}$ such that both of the corresponding pairs are red.

Assuming all of this has succeeded, which occurs with probability $1-O(1/n^2)$, we now uncover the color of every pair $(x, c)$ and the value of any blue pair $\delta_c(x)$ that has not been explored previously in the process. Note that this determines $L_b$.

Now, it either turns out that $|L_b|\geq n^2/2$, or each of the aforementioned red pairs have a probability of at least $1/2$ or mapping to a pair of states $(x', y')$ such that $d_b(x', y')\leq \lfloor 2 \log n \rfloor$. The probability that this happens for at least one such pair is $1-2^{-\Omega(\sqrt{n}p^2)}=1-o(1/n^2)$, in which case $d(x, y) \leq 2.5 \log n,$ as desired.
\end{proof}

We remark that the only way the exploration process in the proposition above fails with a probability greater than $O(1/n^3)$ is if both strings at $t=1$ are discarded. A more elaborate exploration process can be made to work with probability $O(1/n^3)$ if one allows as a third possible outcome that either $x$ or $y$ is isolated.

\begin{proof}[Proof of Theorem \ref{thm:pair}.] Let $k$ be any sufficiently slowly increasing function in $n$, say, $k=\frac12 \log n$. It follows from Corollary \ref{cor:Lb} and Proposition \ref{prop:explore} that, with probability $1-o(1)-O(1/k)=1-o(1)$, all but at most $k$ pairs of states $(x, y)$ satisfy $d(x, y)\leq 2.5 \log n$.

Suppose this holds and let $(x, y)$ be a pair of states such that $d(x, y)>2.5\log n$. Consider the number of states that can be reached from $x$ in $k$ steps. If there are more than $k$ such states (including $x$ itself), then one of them has to correspond to a pair of states satisfying $d(x', y')\leq 2.5\log n$, in which case $d(x, y)\leq 2.5\log n + k$. If there are at most $k$ reachable states (including $x$ itself), then no further states can be reached from $x$ even in more steps. We can easily bound the expected number of such terminal sets of states by
$\sum_{\ell=1}^k n^\ell \cdot \left(\frac{\ell}{n}\right)^{2\ell} = O(1/n).$ Thus, with probability $1-O(1/n)$ no such sets exist. We conclude that, with high probability, $d(x, y)\leq 2.5 \log n + k = 3\log n$ for all $x, y$, as desired.
\end{proof}

\bibliographystyle{abbrv}
\bibliography{references}
\end{document}